\newtheorem{theorem}{Theorem}[section]
\newtheorem*{theorem*}{Theorem}
\newtheorem{lemma}[theorem]{Lemma}
\newtheorem{corollary}[theorem]{Corollary}
\theoremstyle{remark}
\theoremstyle{definition}
\newtheorem{definition}[theorem]{Definition}
\newcommand{\Spvek}[2][r]{%
  \gdef\@VORNE{1}
  \left(\hskip-\arraycolsep%
    \begin{array}{#1}\vekSp@lten{#2}\end{array}%
  \hskip-\arraycolsep\right)}
\def\vekSp@lten#1{\xvekSp@lten#1;vekL@stLine;}
\def\vekL@stLine{vekL@stLine}
\def\xvekSp@lten#1;{\def\temp{#1}%
  \ifx\temp\vekL@stLine
  \else
    \ifnum\@VORNE=1\gdef\@VORNE{0}
    \else\@arraycr\fi%
    #1%
    \expandafter\xvekSp@lten
  \fi}
\journal{}
\def\ps@pprintTitle{%
 \let\@oddhead\@empty
 \let\@evenhead\@empty
 \def\@oddfoot{}%
 \let\@evenfoot\@oddfoot}
\begin{document}

\begin{frontmatter}

%% Title, authors and addresses

%% use the tnoteref command within \title for footnotes;
%% use the tnotetext command for the associated footnote;
%% use the fnref command within \author or \address for footnotes;
%% use the fntext command for the associated footnote;
%% use the corref command within \author for corresponding author footnotes;
%% use the cortext command for the associated footnote;
%% use the ead command for the email address,
%% and the form \ead[url] for the home page:
%%
%% \title{Title\tnoteref{label1}}
%% \tnotetext[label1]{}
%% \author{Name\corref{cor1}\fnref{label2}}
%% \ead{email address}
%% \ead[url]{home page}
%% \fntext[label2]{}
%% \cortext[cor1]{}
%% \address{Address\fnref{label3}}
%% \fntext[label3]{}

\title{Congruences and the discrete Sugeno integrals on bounded distributive lattices \footnote{Preprint of an article published by Elsevier in the Information Sciences 367-368 (2016), 443-448. It is available online at: \newline www.sciencedirect.com/science/article/pii/S002002551630439X}}

%% use optional labels to link authors explicitly to addresses:
%% \author[label1,label2]{}
%% \address[label1]{}
%% \address[label2]{}

\author[up]{Radom\'ir Hala\v{s}}
\ead{radomir.halas@upol.cz}

\author[stu,ost]{Radko Mesiar}
\ead{radko.mesiar@stuba.sk}

\author[up,sav]{Jozef P\'ocs}
\ead{pocs@saske.sk}

\address[up]{Department of Algebra and Geometry, Faculty of Science, Palack\'y University Olomouc, 17. listopadu 12, 771 46 Olomouc, Czech Republic}
\address[stu]{Department of Mathematics and Descriptive Geometry, Faculty of Civil Engineering, Slovak University of Technology in Bratislava, Radlinsk\'eho 11, 810 05 Bratislava 1, Slovakia}
\address[ost]{University of Ostrava, Institute for Research and Applications of Fuzzy Modeling, NSC Centre of Excellence IT4Innovations, 30. dubna 22, 701 03 Ostrava, Czech Republic}
\address[sav]{Mathematical Institute, Slovak Academy of Sciences, Gre\v s\'akova 6, 040 01 Ko\v sice, Slovakia}
\begin{abstract}
We study compatible aggregation functions on a general bounded
distributive lattice $L$, where the compatibility is related to the congruences
on $L$. As a by-product, a new proof of an earlier result of G. Gr\"atzer is obtained. Moreover, 
our results yield a new characterization of discrete Sugeno integrals on bounded distributive lattices.

%We study compatible aggregation functions on a general bounded distributive lattice $L$, where the compatibility is related to the congruences
%on $L$. As a by-product, a new proof of an earlier result of G. Grätzer is obtained. More, our results yield a new characterization of discrete Sugeno integrals on bounded distributive lattices. 
\end{abstract}

\begin{keyword}
monotone compatible function\sep aggregation function\sep bounded distributive lattice \sep discrete Sugeno integral \sep weighted polynomial.
%% keywords here, in the form: keyword \sep keyword

%% PACS codes here, in the form: \PACS code \sep code
\MSC 
%% MSC codes here, in the form: \MSC code \sep code
%% or \MSC[2008] code \sep code (2000 is the default)

\end{keyword}

\end{frontmatter}
%%
%% Start line numbering here if you want
%%
% \linenumbers

\section{Introduction}

Several modern approaches in the area of information sciences do not deal with numerical information, but more general types of data are considered, in particular data which are elements of bounded lattices. 
As a typical example, recall
Goguen's introduction of $L$-fuzzy sets \cite{Go} and several of related particular
concepts based on particular bounded lattices, such as fuzzy type-$2$ sets
proposed by Zadeh \cite{Za} or Atanassov's intuitionistic fuzzy sets \cite{At}. Similarly, bounded lattices are exploited to represent ordinal information dealing with linguistic scales \cite{Za1} etc.
This is especially important in domains, where the essential information is either not available, or superfluous, and only the ordinal relationships are of interest. We can mention several recent papers devoted to a deeper study of lattice-based data see, e.g., \cite{N2,N1}.
Aggregation on bounded lattices belongs to basic tools of lattice-based data, and thus a deeper development of aggregation on lattices is an important and hot topic. To illustrate this fact and to give the reader more broader overview on recent results on aggregation on lattices, we recommend the papers \cite{N4,N3,N5}, 
or recent papers \cite{HaPo1,HaPo2}, where aggregation functions are studied by means of a clone theory.

Recall that an $n$-ary aggregation function $g\colon L^n\to L$, where $(L,0,1,\leq)$ is a bounded lattice (or, more generally, a bounded poset) is characterized by its non-decreasingness in each coordinate and by two boundary conditions $g(0,\dots,0)=0$ and $g(1,\dots,1)=1$. Typical aggregation functions on a bounded distributive lattice are lattice polynomials $p\colon L^n\to L$ given by 

$$ p(a_1,\dots,a_n)=\bigvee_{I\in \mathcal{J}}\big(\bigwedge_{i\in I}a_i\big),$$
where $\mathcal{J}\subseteq 2^{\{1,\dots,n\}}$ is a non-empty subset of the power set of $\{1,\dots,n\}$.

%$$ p(a_1,\dots,a_n)=\bigvee_{I\subseteq \{1,\dots,n\}}\Big(c_I\wedge\bigwedge_{i\in I}a_i\Big),$$
%where $c_I\in L$ for each $I\subseteq \{1,\dots,n\}$ is a constant.

The main aim of this contribution is a study of $n$-ary aggregation functions acting on a bounded distributive lattice $L$ which preserve congruences of $L$. Note that we will show that such aggregation functions are completely characterized by their values at boolean elements  $b\in \{0,1\}^n$. Moreover, our approach shows how idempotent pseudo-boolean functions can be extended into congruence preserving aggregation functions, which surprisingly gives the integration method known as lattice-valued Sugeno integral. When restricting our approach to the real unit interval $[0,1]$, the standard Sugeno integral is recovered, and thus our results bring a new axiomatization of this well-known integral (compare also our recent paper \cite{HMP}).

The paper is organized as follows. In the next section some basic information concerning Sugeno integrals is given. Section \ref{sec:3} brings our main results, characterizing aggregation functions preserving the congruences on $L$. In Section \ref{sec:4} the impact of our new results to the standard Sugeno integral is given. Finally some concluding remarks are added. 

\section{Sugeno integral}\label{sec:2}

Sugeno integral was introduced in 1972 by M. Sugeno in a paper written in Japanese \cite{Sug72}, and it became well-known due to Sugeno's PhD. thesis \cite{Sug74}. For a measurable space $(X,\mathcal{A})$ and a monotone measure $m\colon\mathcal{A}\to [0,1]$, ($m(\emptyset)=0$, $m(X)=1$), the Sugeno integral $\mathsf{Su}_m(f)$ of a measurable function $f\colon X\to [0,1]$ is given by

\begin{equation}\label{eq1}
\mathsf{Su}_m(f)=\bigvee_{t\in[0,1]}\big(t\wedge m(\{x\in X\mid f(x)\geq t\})\big).
\end{equation}

For a finite space $X=\{x_1\dots,x_n\}$, $\mathcal{A}=2^X$, $f\colon X\to[0,1]$ can be identified with a vector $\mathbf{u}\in [0,1]^n$, $\mathbf{u}=(u_1,\dots,u_n)=\big(f(x_1),\dots,f(x_n)\big)$, and formula (\ref{eq1}) can be rewritten into 

\begin{equation}\label{eq2}
\mathsf{Su}_m(f)=\bigvee_{i=1}^{n}\big(u_i\wedge m(\{x\in X\mid f(x)\geq u_i\})\big).
\end{equation}

An alternative formula for the discrete Sugeno integral was proposed in \cite{Mesiar}:

\begin{equation}\label{eq3}
\mathsf{Su}_m(f)=\bigvee_{I\subseteq \{1,\dots,n\}}\big(m(I)\wedge \big(\bigwedge_{i\in I}u_i\big)\big).
\end{equation}

Observe that the Sugeno integral can be seen as a special instance of Ky~Fan metric \cite{Ky Fan} as a distance of the function $f$ and the zero function $\mathbf{0}$.
There are several properties of the discrete Sugeno integral and some of their settings yield an axiomatic characterization of this integral. First of all, for a fixed $m\in\mathbb{N}$, $\mathsf{Su}_m$ can be seen as an aggregation function \cite{Grabisch et al 2009}, i.e., $\mathsf{Su}_m\colon [0,1]^n\to [0,1]$ is non-decreasing in each coordinate, and it satisfies two boundary conditions $\mathsf{Su}_m(\mathbf{0})=0$ and $\mathsf{Su}_m(\mathbf{1})=1$. Next, $\mathsf{Su}_m$ is
\begin{itemize}
\item[--] comonotone maxitive, i.e., $\mathsf{Su}_m(f\vee g)=\mathsf{Su}_m(f)\vee \mathsf{Su}_m(g)$ whenever $f$ and $g$ are comonotone (meaning that they are measurable with respect to a single chain in $2^X$);
\item[--] min-homogeneous, i.e., $\mathsf{Su}_m(\mathbf{c}\wedge f)=c\wedge \mathsf{Su}_m(f)$ for any constant $c\in [0,1]$, $\mathbf{c}=(c,\dots,c)\in[0,1]^n$;
\item[--] horizontally maxitive, i.e., $\mathsf{Su}_m(f)=\mathsf{Su}_m(\mathbf{c}\wedge f)\vee \mathsf{Su}_m(f_c)$ for any $c\in [0,1]$, where $f_c(x_i)=0$ if $f(x_i)\leq c$ and $f_c(x_i)=f(x_i)$ otherwise (observe that $f_c$ is the smallest function on $[0,1]^n$ such that $f=(\mathbf{c}\wedge f)\vee f_c$ );
\item[--] $\mathsf{Su}_m(1_E)=m(E)$, where $1_E$ is the characteristic function of a set $E\subseteq X$;
\item[--] idempotent, i.e., $\mathsf{Su}_m(\mathbf{c})=c$ for any $c\in [0,1]$.
\end{itemize}

For these and several other properties of the discrete Sugeno integral we refer to \cite{Benvenuti,Marichal PhD} and \cite{Couceiro}.
Based on the above references, the Sugeno integral can be characterized as an $[0,1]^n\to [0,1]$ aggregation function which is comonotone maxitive and min-homogeneous. Observe that the comonotone maxitivity can be replaced by the horizontal maxitivity. For some other axiomatizations of the Sugeno integral see \cite{Couceiro}. Marichal \cite{Marichal} has observed an important link between the lattice polynomials on $[0,1]$ and the Sugeno integral. More precisely, he has shown that the class of all Sugeno integrals on $X$ with cardinality $n$ coincides with the class of all polynomial functions $p\colon L^n\to L$, $L=[0,1]$, which are idempotent. This result applies to discrete Sugeno integral defined on any bounded chain $L$, considering the formulae (\ref{eq1})--(\ref{eq3}), and replacing $[0,1]$ by $L$. Also the above mentioned axiomatizations of the Sugeno integral can be extended to any chain $L$. However, in the case of a general bounded distributive lattice $(L,0,1,\leq)$, formulae \eqref{eq1}-\eqref{eq3} are no more equivalent, in general. Following Marichal \cite{Marichal}, we can consider a lattice valued measure $m\colon 2^X\to L$, $m(\emptyset)=0$, $m(X)=1$, $m(E_1)\leq m(E_2)$ whenever $E_1\subseteq E_2\subseteq X$, and for any $f\colon X\to L$ define a discrete $L$-valued Sugeno integral by 

\begin{equation}\label{eq4}
\mathsf{Su}_m(f)=\bigvee_{I\subseteq \{1,\dots,n\}}\big(m(I)\wedge \big(\bigwedge_{i\in I}f(x_i)\big)\big).
\end{equation}

For more details we recommend \cite{Couceiro}.

\section{Compatible aggregation functions on distributive lattices}\label{sec:3}

In this section we clarify the connection between monotone compatible functions on bounded distributive lattices and their weighted lattice polynomials. As the main result we will show that these functions can be identified with Sugeno integrals. 

Recall that a lattice $L$ is distributive, if it satisfies one (or, equivalently, both) of the distributive identities
$$ a\vee(b\wedge c)=(a\vee b)\wedge (a\vee c),\quad a\wedge(b\vee c)=(a\wedge b)\vee (a\wedge c)$$
for all $a,b,c\in L$.
 
\begin{definition}
Let $L$ be a lattice. A binary relation $R\subseteq L^2$ is \textit{compatible} on the lattice $L$ if $(a,b),(c,d)\in R$ imply $(a\vee c,b\vee d)\in R$ and $(a\wedge c,b\wedge d)\in R$ for any $a,b,c,d\in L$. By a \textit{congruence} on $L$ we understand any compatible equivalence on $L$.\end{definition}

In the sequel, for a congruence $\Theta$ on $L$ and $a\in L$, the set $\{x\in L: (a,x)\in\Theta\}$ denotes the congruence class containing the element $a$, and $a\equiv b\pmod\Theta $ will denote the fact that $a$ and $b$ belong to the same congruence class.

\begin{definition}
Let $L$ be a lattice and $n\in \mathbb{N}\cup \{0\}$ be a non-negative integer. By an $n$-ary {\it weighted polynomial}\footnote{We use the name weighted polynomial introduced   in \cite{Marichal}, although in algebraic terminology such functions are called just polynomials.} on the lattice $L$ we mean any function $p: L^n\to L$ defined inductively as follows:
\begin{enumerate}
\item[--] for each $i\in \{1,\dots,n\}$, the $i$-th projection $p(x_1,\dots,x_n)=x_i$ is a weighted polynomial,
\item[--] any constant function $p(x_1,\dots,x_n)=a$ for $a\in L$ is a weighted polynomial,
\item[--] if $p_{1}(x_1,\dots,x_n)$ and $p_{2}(x_1,\dots,x_n)$ are weighted polynomials, then so does the functions $p_{1}(x_1,\dots,x_n)\vee p_{2}(x_1,\dots,x_n)$ and $p_{1}(x_1,\dots,x_n)\wedge p_{2}(x_1,\dots,x_n)$,
\item[--] any weighted polynomial is obtained by finitely many of the preceding steps. 
\end{enumerate}
\end{definition}

Informally, weighted lattice polynomials are functions obtained by composing variables and constant functions by using of lattice operations.
%Note that polynomials defined in this way are called as weighted lattice polynomials in \cite{Marichal}.

\begin{definition}
Let $L$ be a lattice. A function $f:\, L^n\mapsto L$  is called \textit{compatible} if for any congruence $\Theta$ on $L$, 
$$ f(x_1,\dots,x_n)\equiv f(y_1,\dots,y_n)\pmod\Theta,$$
provided $x_i\equiv y_i\pmod\Theta$ for all $i\in \{1,\dots,n\}$.
\end{definition}

To simplify expressions, for any $n$-ary function $f: L^n\to L$ on a lattice $L$ and $\mathbf{x}=(x_1,\dots,x_n), \mathbf{y}=(y_1,\dots,y_n)\in L^n$, we put $f({\mathbf x}):=f(x_1,\dots,x_n)$, and $\mathbf{x}\leq \mathbf{y}$ iff $x_i\leq y_i$ for all $i\in \{1,\dots,n\}$. 

It can be easily seen that for any lattice, its weighted polynomials are always compatible functions. 
Compatible functions on distributive lattices have been studied in deep by many authors, we refer the reader to \cite{Gr1} or \cite{HP}.

The set $\mathrm{Con}\,L$ of all congruences of $L$ is closed under arbitrary intersections, hence $\mathrm{Con}\,L$ forms a complete lattice with respect to the set inclusion. Consequently, for any two elements $a,b\in L$ there is a least congruence $\Theta_{a,b}$ containing the pair $(a,b)$, called the principal congruence generated by the singleton $(a,b)$.

%We recall the description of principal congruences in distributive lattices. In what follows, instead of $(x,y)\in \Theta$ we shall use the  notation 
%$x\equiv y \pmod{\Theta}$.

In order to make the paper self-contained as much as possible, we provide the following important lemma, characterizing the principal congruences on distributive lattices, together with its proof.

\begin{lemma}[\cite{Gr2}, p. 138, Theorem 141]\label{con_lem}
Let $L$ be a distributive lattice, $a,b, x, y\in L$, and let $a\leq b$. 
Then
$$ x\equiv y \pmod{\Theta_{a,b}}\quad \mbox{iff}\quad b\vee x=b\vee y\ \mbox{and}\ a\wedge x=a\wedge y.$$
\end{lemma}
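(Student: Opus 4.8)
The plan is to prove the lemma by showing that the relation $R$ on $L$ defined by
$$ x \mathrel{R} y \iff b\vee x = b\vee y \ \text{and}\ a\wedge x = a\wedge y$$
coincides with the principal congruence $\Theta_{a,b}$. First I would verify that $R$ is itself a congruence. Reflexivity, symmetry and transitivity are immediate, since $R$ is defined through equalities. For compatibility, given $x\mathrel{R}y$ and $u\mathrel{R}v$, the join and meet conditions for the pairs $(x\vee u,\,y\vee v)$ and $(x\wedge u,\,y\wedge v)$ follow by distributing $a\wedge(-)$ over joins and $b\vee(-)$ over meets; this is exactly the step where distributivity of $L$ is essential, and I expect the verification to be routine once the two distributive laws are invoked.

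Next I would check that $(a,b)\in R$: since $a\leq b$ we have $b\vee a = b = b\vee b$ and $a\wedge a = a = a\wedge b$, so the pair satisfies both defining equalities. As $R$ is a congruence containing $(a,b)$ and $\Theta_{a,b}$ is by definition the least such congruence, this yields $\Theta_{a,b}\subseteq R$, which is precisely the ``only if'' direction of the equivalence.

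The harder inclusion is $R\subseteq\Theta_{a,b}$, i.e.\ the ``if'' direction, and this I regard as the main obstacle. Here I would first reduce to the case $x\leq y$ using the standard fact that in any lattice $x\equiv y$ modulo a congruence exactly when $x\wedge y\equiv x\vee y$; replacing $x,y$ by $x\wedge y$ and $x\vee y$ preserves membership in $R$. Assuming $x\leq y$ together with $b\vee x=b\vee y$ and $a\wedge x=a\wedge y$, the key computation is to rewrite $y$ and $x$ in a form built from the pair $(a,b)$: distributivity gives $y = y\wedge(b\vee x) = x\vee(b\wedge y)$, while $a\wedge y = a\wedge x\leq x$ forces $x = x\vee(a\wedge y)$. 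Since $(a,b)\in\Theta_{a,b}$, meeting with $y$ gives $(a\wedge y,\,b\wedge y)\in\Theta_{a,b}$, and joining with $x$ gives $\bigl(x\vee(a\wedge y),\, x\vee(b\wedge y)\bigr)=(x,y)\in\Theta_{a,b}$. Combining both inclusions completes the proof. The only delicate point is arranging these identities so that the generating pair $(a,b)$ is transported by compatibility onto exactly the pair $(x,y)$.
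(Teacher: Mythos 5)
Your proof is correct. Its first half coincides with the paper's argument: both of you define the auxiliary relation by the two equalities, verify via distributivity that it is a congruence, observe that it contains $(a,b)$, and deduce $\Theta_{a,b}\subseteq R$ from minimality of the principal congruence. Where you genuinely diverge is the hard inclusion $R\subseteq\Theta_{a,b}$. The paper handles arbitrary $x,y$ in a single stroke: for any congruence $\Psi$ containing $(a,b)$ it runs the chain
\begin{align*}
x&=(a\wedge x)\vee x=(a\wedge y)\vee x=(a\vee x)\wedge(y\vee x)\equiv(b\vee x)\wedge(y\vee x)\\
&=(b\vee y)\wedge(y\vee x)=(b\wedge x)\vee y\equiv(a\wedge x)\vee y=(a\wedge y)\vee y=y\pmod{\Psi},
\end{align*}
alternating the hypotheses $b\vee x=b\vee y$, $a\wedge x=a\wedge y$ with the congruences $a\vee x\equiv b\vee x$ and $b\wedge x\equiv a\wedge x$ induced by $a\equiv b$. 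You instead first reduce to the comparable case $x\leq y$ --- which is legitimate, since $R$ is already known to be a congruence, so $(x,y)\in R$ yields $(x\wedge y,x\vee y)\in R$, and the standard fact lets you pass back from $(x\wedge y,x\vee y)\in\Theta_{a,b}$ to $(x,y)\in\Theta_{a,b}$ --- and then transport the generating pair directly: meeting $(a,b)$ with $y$ gives $(a\wedge y,b\wedge y)\in\Theta_{a,b}$, and joining with $x$ lands exactly on $(x,y)$ because $x\vee(a\wedge y)=x$ and $x\vee(b\wedge y)=y$ under your hypotheses; both identities are verified correctly using distributivity and $x\leq y$. The net result is the same ($R=\Theta_{a,b}$, hence the stated equivalence), but the trade-off differs: your route costs an extra (standard) lemma on comparable pairs, in exchange for making transparent how compatibility carries $(a,b)$ onto $(x,y)$, whereas the paper's computation is shorter and fully self-contained but conceals that mechanism inside a median-style manipulation.
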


\begin{proof}

Define a binary relation $\Theta\subseteq L\times L$ by $x\equiv y \pmod{\Theta}$, provided 
$$b\vee x=b \vee y\quad \mbox{and}\quad a\wedge x= a\wedge y.$$ 
It can be easily seen that $\Theta$ is reflexive, symmetric and transitive, i.e., it is an equivalence relation on $L$. If $x\equiv y \pmod{\Theta}$ and $u\equiv v \pmod{\Theta}$, then 
$$ b\vee (x\vee u)=(b\vee x)\vee (b\vee u)= (b\vee y)\vee (b\vee v)= b\vee (y\vee v)$$ 
and
$$ a\wedge (x\vee u)=(a\wedge x)\vee (a\wedge u)= (a\wedge y)\vee (a\wedge v)= a\wedge (y\vee v).$$
Hence, $x\vee u\equiv y\vee v \pmod{\Theta}$. Similarly, $x\wedge u\equiv y\wedge v \pmod{\Theta}$ and we conclude that $\Theta$ is a congruence relation on $L$. Moreover, $a\equiv b\pmod{\Theta}$ can be easily verified.

Further, let $\Psi$ be any congruence relation such that $a\equiv b\pmod{\Psi}$. We show that $\Theta\subseteq \Psi$. For this suppose $x\equiv y \pmod{\theta}$. Since
$a\vee x\equiv b\vee x \pmod{\Psi}$, $a\wedge x\equiv b\wedge x \pmod{\Psi}$ and $b\vee x=b\vee y$, $a\wedge x=a\wedge y$, we obtain
\begin{equation*} 
\begin{split}
x=&(a\wedge x)\vee x=(a\wedge y)\vee x =(a\vee x) \wedge (y\vee x)\equiv (b\vee x)\wedge (y\vee x) \\
=&(b\vee y)\wedge (y\vee x) = (b\wedge x)\vee y \equiv (a\wedge x)\vee y =(a\wedge y) \vee y=y. \pmod{\Psi}
\end{split}
\end{equation*}
This shows that $\Theta\subseteq \Psi$ for any congruence $\Psi$ with $a\equiv b \pmod{\Psi}$, i.e., $\Theta=\Theta_{a,b}$. Hence, $b\vee x=b\vee y$ and $a\wedge x=a\wedge y$ implies $x\equiv y\pmod{\Theta_{a,b}}$, while $b\vee x\neq b\vee y$ or $a\wedge x\neq a\wedge y$ yields $x\not\equiv y\pmod{\Theta_{a,b}}$.
\end{proof}

Recall that the median function $\mathsf{med}\colon L^3\to L$ on a lattice $L$ is defined by 
$$ \mathsf{med}(x,y,z)=(x\vee y)\wedge (y\vee z)\wedge (z\vee x).$$

Note that in distributive lattices,  $\mathsf{med}(x,y,z)=(x\wedge y)\vee (y\wedge z)\vee (z\wedge x)$ and if $x\leq z$ then 
$\mathsf{med}(x,y,z)=(x\vee y)\wedge z= x\wedge (y\vee z)$. 

Let $L$ be a bounded distributive lattice with $0$ and $1$ as its bottom and top element, respectively.
Let $n\geq 1$ be a positive integer. For $k\in\{1,\dots,n\}$ and any $\mathbf{x}=(x_1,\dots,x_n)\in L^n$ the elements $\mathbf{x}^0_k,\mathbf{x}^1_k\in L^n$ are defined by  

\begin{equation}\label{nne0} \mathbf{x}^0_k=(x_1,\dots,x_{k-1},0,x_{k+1},\dots x_n), \end{equation}
\begin{equation}\label{nne1} \mathbf{x}^1_k=(x_1,\dots,x_{k-1},1,x_{k+1},\dots x_n). \end{equation}

The following theorem relates compatibility with the median-based decomposition property.

\begin{theorem}
Let $f\colon L^n\to L$ be a nondecreasing function. Then $f$ is compatible if and only if for each $k\in\{1,\dots,n\}$
\begin{equation}\label{eq_m1}
 f(\mathbf{x})=\mathsf{med}\big(f(\mathbf{x}^0_k),x_k,f(\mathbf{x}^1_k)\big),\ \mbox{for all}\ \mathbf{x}\in L^n.
\end{equation}
\end{theorem}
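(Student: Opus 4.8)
The plan is to prove both implications, the forward one being the substantive part; the engine throughout will be Lemma~\ref{con_lem}, applied to two carefully chosen principal congruences. Fix $\mathbf{x}\in L^n$ and $k$, and abbreviate $a=f(\mathbf{x}^0_k)$, $c=f(\mathbf{x})$, $b=f(\mathbf{x}^1_k)$; since $\mathbf{x}^0_k\leq\mathbf{x}\leq\mathbf{x}^1_k$, monotonicity gives $a\leq c\leq b$. Recall from the remark preceding the theorem that, because $a\leq b$, one has $\mathsf{med}(a,x_k,b)=(a\vee x_k)\wedge b$, which by distributivity also equals $a\vee(x_k\wedge b)$. For the forward direction I would assume $f$ compatible and first consider the principal congruence $\Theta_{0,x_k}$. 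By Lemma~\ref{con_lem} applied to the pair $0\leq x_k$, one has $u\equiv v\pmod{\Theta_{0,x_k}}$ iff $x_k\vee u=x_k\vee v$ (the meet condition with $0$ being vacuous). As $x_k\vee x_k=x_k\vee 0$, the $k$-th coordinates of $\mathbf{x}$ and $\mathbf{x}^0_k$ are congruent while all others coincide, so $\mathbf{x}\equiv\mathbf{x}^0_k$ coordinatewise, and compatibility yields $x_k\vee c=x_k\vee a$. Symmetrically, using $\Theta_{x_k,1}$ (for the pair $x_k\leq 1$), where $u\equiv v$ iff $x_k\wedge u=x_k\wedge v$, I obtain $\mathbf{x}\equiv\mathbf{x}^1_k$ and hence $x_k\wedge c=x_k\wedge b$.

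It then remains to combine these two identities with $a\leq c\leq b$ to recover the median. From $x_k\vee c=x_k\vee a$ we get $c\leq x_k\vee c=a\vee x_k$, and together with $c\leq b$ this gives $c\leq(a\vee x_k)\wedge b=\mathsf{med}(a,x_k,b)$. Conversely, using $x_k\wedge c=x_k\wedge b$ and $a\leq c$, we have $\mathsf{med}(a,x_k,b)=a\vee(x_k\wedge b)=a\vee(x_k\wedge c)\leq c$. The two inequalities give exactly~\eqref{eq_m1}.

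For the backward direction, assume \eqref{eq_m1} holds for every $k$, and let $\Theta$ be a congruence with $x_i\equiv y_i\pmod\Theta$ for all $i$. I would change one coordinate at a time: it suffices to show that whenever $\mathbf{x}$ and $\mathbf{x}'$ differ only in the $k$-th coordinate and $x_k\equiv x'_k\pmod\Theta$, then $f(\mathbf{x})\equiv f(\mathbf{x}')\pmod\Theta$; interpolating through the vectors $(y_1,\dots,y_j,x_{j+1},\dots,x_n)$ and invoking transitivity then yields the general claim. For such $\mathbf{x},\mathbf{x}'$ we have $\mathbf{x}^0_k=(\mathbf{x}')^0_k$ and $\mathbf{x}^1_k=(\mathbf{x}')^1_k$, so by \eqref{eq_m1} both $f(\mathbf{x})$ and $f(\mathbf{x}')$ are medians of the \emph{same} pair $a,b$, with $x_k$ respectively $x'_k$ in the middle. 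Since $\mathsf{med}$ is a term built from $\vee$ and $\wedge$, and a congruence is by definition compatible with both operations, $x_k\equiv x'_k$ forces $\mathsf{med}(a,x_k,b)\equiv\mathsf{med}(a,x'_k,b)$, that is $f(\mathbf{x})\equiv f(\mathbf{x}')\pmod\Theta$.

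The main obstacle is the forward direction, and specifically the choice of the two principal congruences $\Theta_{0,x_k}$ and $\Theta_{x_k,1}$: the key point is that specializing Lemma~\ref{con_lem} to a bottom or top endpoint collapses one of its two conditions and delivers precisely the single join- or meet-identity one needs. Once these identities are secured, the forward direction reduces to routine distributive manipulations using $a\leq c\leq b$, and the backward direction is essentially immediate from the definition of a congruence together with the fact that the median is a lattice term.
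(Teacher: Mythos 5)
Your proof is correct and follows essentially the same route as the paper's: both hinge on Lemma~\ref{con_lem} applied to the principal congruences $\Theta_{0,x_k}$ and $\Theta_{x_k,1}$ to extract the identities $x_k\vee f(\mathbf{x})=x_k\vee f(\mathbf{x}^0_k)$ and $x_k\wedge f(\mathbf{x})=x_k\wedge f(\mathbf{x}^1_k)$, and both establish the converse by changing one coordinate at a time and invoking transitivity of the congruence. The only differences are organizational and harmless: the paper first proves the unary case and lifts it to $n$ variables via the sections $f_{\overline{\mathbf{a}}_k}$, whereas you apply compatibility directly to the coordinatewise-congruent vectors $\mathbf{x}$, $\mathbf{x}^0_k$ and $\mathbf{x}$, $\mathbf{x}^1_k$, and you conclude with a two-sided inequality $f(\mathbf{x})\leq\mathsf{med}\leq f(\mathbf{x})$ instead of the paper's equational computation.
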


\begin{proof}

First we show that the condition \eqref{eq_m1} is valid for all unary compatible functions, i.e., given a nondecreasing compatible function $f\colon L \to L$ the equality 
$$f(x)=(f(0)\vee x)\wedge f(1)=\mathsf{med}\big(f(0),x,f(1)\big)$$ holds for all $x\in L$.

Since $f$ is compatible, from $0\equiv a\pmod{\Theta_{0,a}}$ we have $f(a)\equiv f(0)\pmod{\Theta_{0,a}} $,
while $a\equiv 1\pmod{\Theta_{a,1}}$ yields $ f(a)\equiv f(1)\pmod{\Theta_{a,1}}$.
According to Lemma \ref{con_lem} we have  
$$ a\vee f(a)= a\vee f(0)\quad \mbox{and} \quad a\wedge f(a)= a\wedge f(1). $$
From this, using distributivity of $L$ and the fact that $f$ is nondecreasing, we obtain 

\begin{equation*}
\begin{split}
f(a) &=f(a)\vee \big(a\wedge f(a)\big)=f(a)\vee \big(a\wedge f(1)\big)=\big(f(a)\vee a\big)\wedge \big(f(a)\vee f(1)\big)\\
     &=\big(f(0)\vee a\big)\wedge f(1)=\mathsf{med}\big(f(0),a,f(1)\big).
\end{split}
\end{equation*}

Further, let $n\geq 2$ and $f\colon L^n\to L$ be a nondecreasing compatible function. For $k\in\{1,\dots,n\}$ and for an arbitrary $(n-1)$-tuple $\overline{\mathbf{a}}_k=(a_1,\dots, a_{k-1},a_{k+1},\dots,a_n)\in L^{n-1}$ we define the unary function $f_{\overline{\mathbf{a}}_k}(x)\colon L\to L$ given by
$$ f_{\overline{\mathbf{a}}_k}(x)=f(a_1,\dots,a_{k-1},x,a_{k+1},\dots,a_n).$$ 
Obviously, $f_{\overline{\mathbf{a}}_k}$ is nondecreasing as well as compatible. Hence for any $a\in L$ we obtain 
$$f_{\overline{\mathbf{a}}_k}(a)=\big(f_{\overline{\mathbf{a}}_k}(0)\vee a\big)\wedge f_{\overline{\mathbf{a}}_k}(1)=\mathsf{med}\big(f_{\overline{\mathbf{a}}_k}(0),a,f_{\overline{\mathbf{a}}_k}(1)\big).$$

Moreover for $\mathbf{a}=(a_1,\dots,a_{k-1},a,a_{k+1},\dots,a_n)$ we have $f_{\overline{\mathbf{a}}_k}(0)=f(\mathbf{a}^0_k)$ and $f_{\overline{\mathbf{a}}_k}(1)=f(\mathbf{a}^1_k)$, where $\mathbf{a}^0_k$ and $\mathbf{a}^1_k$ are defined by \eqref{nne0} and \eqref{nne1} respectively.
Since $\overline{\mathbf{a}}_k$ and $a$ were arbitrary, it follows that $\eqref{eq_m1}$ holds.
\medskip

Conversely, assume that $f\colon L^n \to L$, $n\geq 1$ satisfies \eqref{eq_m1}. Then for each $k\in\{1,\dots,n\}$ and all $\overline{\mathbf{a}}_k\in L^{n-1}$ the unary function $f_{\overline{\mathbf{a}}_k}$ is a polynomial, i.e., it is compatible. Let $\Theta$ be a congruence relation on $L$, and $\mathbf{c}=(c_1,\dots,c_n),\mathbf{d}=(d_1,\dots,d_n)\in L^n$ be such that $c_i\equiv d_i\pmod{\Theta}$ for all $i\in\{1,\dots,n\}$. Using compatibility of the unary functions we obtain

\begin{equation*}
\begin{split}
f(c_1,c_2,\dots,c_{n-1},c_n)&\equiv f(d_1,c_2,\dots,c_{n-1},c_n) \pmod{\Theta}\\
f(d_1,c_2,\dots,c_{n-1},c_n)&\equiv f(d_1,d_2,\dots,c_{n-1},c_n) \pmod{\Theta}\\
                    &\hspace{0.20cm}\vdots                    \\
f(d_1,d_2,\dots,d_{n-1},c_n)&\equiv f(d_1,d_2,\dots,d_{n-1},d_n) \pmod{\Theta},\\
\end{split}
\end{equation*}
and the transitivity of $\Theta$ yields $f(\mathbf{c})\equiv f(\mathbf{d})\pmod{\Theta}$. This shows that $f$ is compatible.
\end{proof}

\begin{theorem}
Let $f_1\colon L^n\to L$ and $f_2\colon L^n\to L$ be two nondecreasing compatible functions. If $f_1(\mathbf{b})=f_2(\mathbf{b})$ for all $\mathbf{b}\in\{0,1\}^n$, then $f_1(\mathbf{x})=f_2(\mathbf{x})$ for all $\mathbf{x}\in L^n$.
\end{theorem}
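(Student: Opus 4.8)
The plan is to argue by induction on the arity $n$, using the median decomposition \eqref{eq_m1} from the preceding theorem as the engine. The guiding idea is that \eqref{eq_m1} expresses $f(\mathbf{x})$ purely in terms of $x_k$ together with the values of $f$ at two points whose $k$-th coordinate has been pinned to $0$ or $1$; iterating this over all coordinates pins every slot to a boolean value, so $f(\mathbf{x})$ is ultimately a fixed median expression in the $x_i$ whose only data are the elements $f(\mathbf{b})$, $\mathbf{b}\in\{0,1\}^n$. Two nondecreasing compatible functions with identical boolean data must therefore coincide.

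For the base case $n=1$, the unary instance of the preceding theorem gives $f_i(x)=\mathsf{med}\big(f_i(0),x,f_i(1)\big)$ for $i=1,2$. Since $f_1$ and $f_2$ agree at the two boolean points $0$ and $1$, the two median expressions are literally the same, whence $f_1(x)=f_2(x)$ for every $x\in L$.

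For the inductive step I would assume the claim for arity $n-1$ and let $f_1,f_2\colon L^n\to L$ be nondecreasing compatible functions agreeing on $\{0,1\}^n$. Fixing the last coordinate, I define the $(n-1)$-ary functions
$$g_i^{\,0}(x_1,\dots,x_{n-1})=f_i(x_1,\dots,x_{n-1},0),\qquad g_i^{\,1}(x_1,\dots,x_{n-1})=f_i(x_1,\dots,x_{n-1},1)$$
for $i=1,2$. Each is clearly nondecreasing, and it is compatible: if $x_j\equiv y_j\pmod\Theta$ for $j<n$, then using the reflexive pair $0\equiv 0$ (resp. $1\equiv 1$) in the last slot together with compatibility of $f_i$ gives $g_i^{\,0}(\mathbf{x})\equiv g_i^{\,0}(\mathbf{y})$ (resp. the analogue for $g_i^{\,1}$). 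Because $f_1$ and $f_2$ agree on all of $\{0,1\}^n$, in particular $g_1^{\,0}=g_2^{\,0}$ and $g_1^{\,1}=g_2^{\,1}$ on $\{0,1\}^{n-1}$, so the induction hypothesis yields $g_1^{\,0}=g_2^{\,0}$ and $g_1^{\,1}=g_2^{\,1}$ on all of $L^{n-1}$.

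To finish, I apply \eqref{eq_m1} in coordinate $k=n$ to each $f_i$, obtaining
$$f_i(\mathbf{x})=\mathsf{med}\big(g_i^{\,0}(x_1,\dots,x_{n-1}),\,x_n,\,g_i^{\,1}(x_1,\dots,x_{n-1})\big),$$
and the equalities $g_1^{\,0}=g_2^{\,0}$, $g_1^{\,1}=g_2^{\,1}$ immediately force $f_1(\mathbf{x})=f_2(\mathbf{x})$ for every $\mathbf{x}\in L^n$. Since the substantive input—the median decomposition—is already supplied by the previous theorem, the only step here that genuinely requires care is checking that the coordinate-restricted functions $g_i^{\,0},g_i^{\,1}$ remain compatible so that the induction hypothesis applies; this is precisely where one exploits the freedom to insert reflexive pairs into a congruence, and it is the hinge of the argument.
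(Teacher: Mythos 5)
Your proof is correct and follows essentially the same route as the paper: induction on the arity, with the base case and inductive step both driven by the median decomposition \eqref{eq_m1}, and the step carried out by restricting the last coordinate to $0$ and $1$. The only cosmetic difference is that you verify the restricted functions are compatible (via inserting reflexive pairs), whereas the paper notes they satisfy \eqref{eq_m1}; by the preceding theorem these are equivalent ways to invoke the induction hypothesis.
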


\begin{proof}
We proceed by induction with respect to the arity of the functions. Note that by the previous theorem any nondecreasing compatible function satisfies the median-based decomposition property \eqref{eq_m1}. Obviously, for $n=1$ we obtain 
$$ f_1(x)=\mathsf{med}\big(f_1(0),x,f_1(1)\big)=\mathsf{med}\big(f_2(0),x,f_2(1)\big)=f_2(x).$$ 
Further, assume that the assertion is valid for some $n\geq 1$. Let $f_1,f_2\colon L^{n+1}\to L$ satisfy the assumptions of the theorem. For $i\in\{1,2\}$ define $\overline{f_i^0}\colon L^n\to L$ by 
$$\overline{f_i^0}(x_1,\dots x_n)=f_i(x_1,\dots,x_n,0)$$ 
and similarly $\overline{f_i^1}\colon L^n\to L$ by 
$$\overline{f_i^1}(x_1,\dots x_n)=f_i(x_1,\dots,x_n,1).$$

Since $f_1(\mathbf{b})=f_2(\mathbf{b})$ for all $\mathbf{b}\in \{0,1\}^{n+1}$, it is easily seen that $\overline{f_1^0}(\mathbf{c})=\overline{f_2^0}(\mathbf{c})$ and $\overline{f_1^1}(\mathbf{c})=\overline{f_2^1}(\mathbf{c})$ for all $\mathbf{c}\in \{0,1\}^n$. Moreover these functions fulfill also condition \eqref{eq_m1}, hence according to the induction hypothesis for $\mathbf{x}=(x_1,\dots,x_n,x_{n+1})\in L^{n+1}$ and the corresponding first $n$ coordinates $\overline{\mathbf{x}}=(x_1,\dots,x_n)\in L^n$ we obtain
\begin{equation*}
\begin{split}
 f_1(\mathbf{x})&=\mathsf{med}\big(f_1(\mathbf{x}^0_{n+1}),x_{n+1},f_1(\mathbf{x}^1_{n+1})\big)=\mathsf{med}\big(\overline{f_1^0}(\overline{\mathbf{x}}),x_{n+1},
 \overline{f_1^1}(\overline{\mathbf{x}})\big) \\
 &=\mathsf{med}\big(\overline{f_2^0}(\overline{\mathbf{x}}),x_{n+1}, \overline{f_2^1}(\overline{\mathbf{x}})\big)=\mathsf{med}\big(f_2(\mathbf{x}^0_{n+1}),x_{n+1},f_2(\mathbf{x}^1_{n+1})\big)=f_2(\mathbf{x}).
\end{split}
\end{equation*} 
\end{proof}

\begin{corollary}\label{cor_m1}
Any nondecreasing compatible function $f\colon L^n\to L$ is uniquely determined by its values at boolean elements $\mathbf{b}\in \{0,1\}^n$.
\end{corollary}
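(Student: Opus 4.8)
The plan is to derive this immediately from the preceding Theorem, reading the corollary as a uniqueness statement. To say that $f$ is \emph{uniquely determined by its values at boolean elements} means precisely that the restriction $f|_{\{0,1\}^n}$ cannot be shared by a distinct nondecreasing compatible function. So I would make the claim explicit as follows: suppose $g\colon L^n\to L$ is any nondecreasing compatible function satisfying $g(\mathbf{b})=f(\mathbf{b})$ for every $\mathbf{b}\in\{0,1\}^n$, and show that $g=f$.

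First I would apply the preceding Theorem with $f_1:=f$ and $f_2:=g$. All hypotheses are met: both $f$ and $g$ are nondecreasing and compatible by assumption, and they agree on all of $\{0,1\}^n$ by the choice of $g$. The Theorem then yields $f(\mathbf{x})=g(\mathbf{x})$ for all $\mathbf{x}\in L^n$, i.e.\ $f=g$. Since $g$ was an arbitrary nondecreasing compatible function agreeing with $f$ on the boolean cube, this shows that the boolean restriction determines $f$ on the whole of $L^n$. Equivalently, the map sending a nondecreasing compatible function to its restriction on $\{0,1\}^n$ is injective, which is exactly the asserted uniqueness.

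I do not expect any genuine obstacle here: the corollary is essentially a restatement of the preceding Theorem, and the only real content is recognizing that ``uniquely determined by its values on $\{0,1\}^n$'' is the injectivity packaged by that Theorem. The one point worth flagging is that the uniqueness is meant \emph{within} the class of nondecreasing compatible functions, not among arbitrary maps $L^n\to L$; the boolean values by themselves impose no constraint on a general function, and it is the combination of monotonicity and compatibility (through the median decomposition \eqref{eq_m1}) that forces the extension from $\{0,1\}^n$ to $L^n$ to be unique.
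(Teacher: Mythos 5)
Your proof is correct and takes exactly the same route as the paper: the corollary is stated there as an immediate consequence of the preceding theorem (the paper gives no separate proof), and your instantiation of that theorem with $f_1:=f$ and $f_2:=g$ is precisely that derivation. Your closing remark that the uniqueness is relative to the class of nondecreasing compatible functions is the correct reading of the statement.
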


Let us note that Corollary \ref{cor_m1} is due to G. Gr\"atzer \cite{Gr1}, but his proof is completely different.  
It uses the fact that every distributive lattice $L$ can be (canonically) embedded into a Boolean algebra (using a set-theoretical representation of $L$) \cite{Gr2}.

Characterization of  functions satisfying \eqref{eq_m1} as precisely those functions which are weighted lattice polynomials was established by Marichal in \cite{Marichal}. 
However, Corollary \ref{cor_m1} enables to associate a unique $L$-valued monotone measure to each monotone compatible function. Using this fact, we present a different proof based on the modification of the approach developed in \cite{HP}.
  
Given a  nondecreasing compatible function $g\colon L^n\to L$, for any $\mathbf{b}\in \{0,1\}^n$ consider the functions 
\begin{equation}\label{eq5}
G_{\mathbf{b}}(\mathbf{x}):=g(\mathbf{b})\wedge \bigwedge \big\{x_i\mid\, i\in \mathbf{b}^{-1}(1)\big\},
\end{equation}
where $\mathbf{b}^{-1}(1)=\{i\in \{1,\dots,n\}: b_i=1\}$.

\begin{theorem}
For any monotone compatible function $g$ the following equality holds:
\begin{equation}\label{eq6}
g(\mathbf{x})=\bigvee \big\{G_{\mathbf{b}}(\mathbf{x})\mid\, \mathbf{b}\in \{0,1\}^n\big\}.
\end{equation}
\end{theorem}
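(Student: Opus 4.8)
My plan is to recognise the right-hand side of \eqref{eq6} as a weighted lattice polynomial and then to invoke the uniqueness statement of Corollary \ref{cor_m1}. First I would abbreviate the right-hand side as $h(\mathbf{x}):=\bigvee\{G_{\mathbf{b}}(\mathbf{x})\mid \mathbf{b}\in\{0,1\}^n\}$. Each $G_{\mathbf{b}}$ is, by \eqref{eq5}, the meet of the constant $g(\mathbf{b})$ with finitely many projections $x_i$, $i\in\mathbf{b}^{-1}(1)$, and is therefore a weighted polynomial; since $\{0,1\}^n$ is finite, $h$ is a finite join of weighted polynomials and hence itself a weighted polynomial. I would then note, as recorded earlier in the paper, that every weighted polynomial is automatically nondecreasing and compatible, so $h$ has both properties. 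As $g$ is nondecreasing and compatible by hypothesis, Corollary \ref{cor_m1} reduces the desired identity $g=h$ to checking that $g$ and $h$ agree on all boolean arguments $\mathbf{c}\in\{0,1\}^n$.

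The heart of the argument, and the step I expect to require the most care, is this boolean evaluation. Fixing $\mathbf{c}\in\{0,1\}^n$, I would examine a single term $G_{\mathbf{b}}(\mathbf{c})=g(\mathbf{b})\wedge\bigwedge\{c_i\mid i\in\mathbf{b}^{-1}(1)\}$. Because every $c_i$ lies in $\{0,1\}$, the inner meet equals $1$ exactly when $c_i=1$ for every $i\in\mathbf{b}^{-1}(1)$, that is, when $\mathbf{b}\leq\mathbf{c}$ (with the empty meet for $\mathbf{b}=\mathbf{0}$ understood to be the top element $1$), and equals $0$ otherwise. Hence $G_{\mathbf{b}}(\mathbf{c})=g(\mathbf{b})$ whenever $\mathbf{b}\leq\mathbf{c}$ and $G_{\mathbf{b}}(\mathbf{c})=0$ whenever $\mathbf{b}\not\leq\mathbf{c}$, which gives
\[
h(\mathbf{c})=\bigvee_{\mathbf{b}\leq\mathbf{c}}g(\mathbf{b}).
\]
Finally I would use the monotonicity of $g$: every $\mathbf{b}\leq\mathbf{c}$ satisfies $g(\mathbf{b})\leq g(\mathbf{c})$, while the index $\mathbf{b}=\mathbf{c}$ contributes $g(\mathbf{c})$ itself, so the join on the right collapses to $g(\mathbf{c})$, yielding $h(\mathbf{c})=g(\mathbf{c})$.

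With $g$ and $h$ matched on every boolean point, Corollary \ref{cor_m1} forces $g(\mathbf{x})=h(\mathbf{x})$ for all $\mathbf{x}\in L^n$, which is precisely \eqref{eq6}. I anticipate that the only delicate point is the boolean computation above, where one must respect the empty-meet convention for $\mathbf{b}=\mathbf{0}$ and then exploit monotonicity to reduce the join over all $\mathbf{b}\leq\mathbf{c}$ to the single value $g(\mathbf{c})$; the rest reduces to the routine observation that $h$, as a weighted polynomial, is nondecreasing and compatible, so that the structural uniqueness theorem becomes applicable.
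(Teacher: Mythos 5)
Your proof is correct and follows essentially the same route as the paper: both reduce the identity to boolean arguments via Corollary \ref{cor_m1} and then settle the boolean case by splitting on whether $\mathbf{b}\leq\mathbf{x}$, using monotonicity of $g$ to absorb the terms with $\mathbf{b}<\mathbf{x}$. Your version is in fact slightly more careful, since you explicitly verify that the right-hand side is a weighted polynomial and hence nondecreasing and compatible (needed to invoke Corollary \ref{cor_m1}), a point the paper passes over by merely noting compatibility of both sides.
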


\begin{proof}
Since the functions on both sides are compatible,
due to Corollary \ref{cor_m1}, it is enough to prove the above equality only for boolean inputs $\mathbf{x}\in\{0,1\}^n$. Consider an arbitrary $\mathbf{b}\in \{0,1\}^n$.
We have the following possibilities:
\begin{itemize}
\item[(1)] Let $\mathbf{b}\nleq \mathbf{x}$. Then there is $j\in\{1,\dots,n\}$ with $b_j=1$ (i.e., $j\in \mathbf{b}^{-1}(1)$) and $x_j=0$, which yields $\bigwedge \{x_i\mid i\in \mathbf{b}^{-1}(1)\}=0$. Consequently, we obtain $G_{\mathbf{b}}(\mathbf{x})=g(\mathbf{b})\wedge 0=0$.

\item[(2)] Let $\mathbf{b}=\mathbf{x}$. Then, evidently, $\bigwedge \{x_i\mid i\in \mathbf{b}^{-1}(1)\}=\bigwedge \{x_i\mid i\in \mathbf{x}^{-1}(1)\}=\bigwedge 1=1$ whenever $\mathbf{b}^{-1}(1)\neq\emptyset$, and it equals $\bigwedge \emptyset=1$ in case $\mathbf{b}^{-1}(1)=\emptyset$. In both cases we obtain 
$G_{\mathbf{b}}(\mathbf{x})=g(\mathbf{b})\wedge 1=g(\mathbf{b})=g(\mathbf{x})$ since we assumed $\mathbf{b}=\mathbf{x}$.
 
\item[(3)] Assume $\mathbf{b}<\mathbf{x}$. Then as the function $g$ is monotone and $\mathbf{b}<\mathbf{x}$, we conclude $G_{\mathbf{b}}(\mathbf{x})\leq g(\mathbf{b})\leq g(\mathbf{x})$.
\end{itemize}
The above discussion leads to the desired equality $$g(\mathbf{x})=\bigvee \big\{G_{\mathbf{b}}(\mathbf{x})\mid \mathbf{b}\in \{0,1\}^n\big\}.$$  
\end{proof}

Consequently, we obtain the following conclusion.

\begin{corollary} 
Compatible aggregation functions on distributive lattices are just their weighted idempotent lattice
polynomials.
\end{corollary}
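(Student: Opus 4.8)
The plan is to prove the two inclusions of this set-theoretic equality separately, using formula \eqref{eq6} from the theorem just proved as the bridge.

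For the forward inclusion, suppose $g\colon L^n\to L$ is a compatible aggregation function. Since $g$ is in particular nondecreasing and compatible, the previous theorem applies and gives
$$g(\mathbf{x})=\bigvee\big\{g(\mathbf{b})\wedge\bigwedge\{x_i\mid i\in\mathbf{b}^{-1}(1)\}\mid\mathbf{b}\in\{0,1\}^n\big\}.$$
The right-hand side is assembled from the projections $x_i$, the finitely many constants $g(\mathbf{b})$, and finitely many applications of $\wedge$ and $\vee$; by the very definition of a weighted polynomial, $g$ is therefore a weighted lattice polynomial. It then remains only to see that $g$ is idempotent, and for this I would evaluate the displayed formula at the constant tuple $\mathbf{x}=(c,\dots,c)$.

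For the backward inclusion, suppose $g$ is a weighted idempotent lattice polynomial. As already recorded in the text, every weighted polynomial is compatible. That $g$ is nondecreasing in each variable follows by a routine induction on the construction of $g$: projections and constant functions are nondecreasing, and both $\vee$ and $\wedge$ preserve this property. Finally, specializing idempotence $g(c,\dots,c)=c$ to $c=0$ and $c=1$ produces exactly the two boundary conditions $g(\mathbf{0})=0$ and $g(\mathbf{1})=1$, so $g$ is an aggregation function; being compatible as well, it is a compatible aggregation function.

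The only step needing genuine care --- and hence the main obstacle --- is the idempotence computation in the forward inclusion. Substituting $\mathbf{x}=(c,\dots,c)$, the summand indexed by $\mathbf{b}=\mathbf{0}$ equals the constant $g(\mathbf{0})=0$ (with the empty meet read as the top element), while for each $\mathbf{b}\neq\mathbf{0}$ the inner meet collapses to $c$, making the summand $g(\mathbf{b})\wedge c\leq c$. Because the index $\mathbf{b}=\mathbf{1}$ contributes $g(\mathbf{1})\wedge c=1\wedge c=c$, the whole join equals $c$; thus $g(c,\dots,c)=c$. The delicate points are the empty-meet convention for $\mathbf{b}=\mathbf{0}$ and the observation that all remaining summands are dominated by the single term coming from $\mathbf{b}=\mathbf{1}$.
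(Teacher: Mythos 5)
Your proposal is correct and follows exactly the route the paper intends: the corollary is stated as an immediate consequence of the representation formula \eqref{eq6}, and your two inclusions (reading \eqref{eq6} as a weighted polynomial expression, plus the idempotence computation at $(c,\dots,c)$ with the empty-meet convention; conversely, compatibility and monotonicity of weighted polynomials plus idempotence at $c=0,1$ for the boundary conditions) are precisely the details the paper leaves implicit behind the word ``Consequently.''
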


Let us stress that this statement does not depend on the cardinality of a lattice $L$, and hence it holds also in a classical case when $L=[a,b]$ is any bounded interval of reals.

\section{Sugeno integral as a compatible aggregation function}\label{sec:4}

Consider a lattice $([0,1],0,1,\leq)$, where $L = [0,1]$ is the real unit interval.
Then each element $\mathbf{a}\in\{0,1\}^n$ can be identified with a characteristic function of
a subset $I$ of $\{1,\dots,n\}$, $\mathbf{a} = 1_I$. Comparing formulae (\ref{eq3}) and (\ref{eq6}), the next
result is obtained easily.

\begin{theorem}\label{thm2}
 Let $L$ be a bounded distributive lattice and $A\colon L^n \to L$ be an aggregation function. Then the
following are equivalent:
\begin{itemize}
\item[(i)] A is a compatible function
\item[(ii)] there is a monotone $L$-valued measure $m$ on $\{1,\dots,n\}$ so that $A = \mathsf{Su}_m$, i.e., $A$ is
the Sugeno integral with respect to the measure $m$.
\end{itemize}
\end{theorem}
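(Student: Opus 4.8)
The plan is to establish the equivalence by connecting the previously proved characterization \eqref{eq6} with the formula \eqref{eq4} for the discrete $L$-valued Sugeno integral. The key observation is that both formulae express a compatible function as a join of terms, each of which is a meet of some of the input variables capped by a constant. My strategy is to show that these two representations coincide once we identify the boolean elements $\mathbf{b}\in\{0,1\}^n$ with the subsets $I\subseteq\{1,\dots,n\}$ via $I=\mathbf{b}^{-1}(1)$, and to match up the associated constants.

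First I would prove the direction (i)$\Rightarrow$(ii). Assume $A\colon L^n\to L$ is a compatible aggregation function; in particular $A$ is nondecreasing, so by the theorem yielding \eqref{eq6} we have
$$A(\mathbf{x})=\bigvee\big\{G_{\mathbf{b}}(\mathbf{x})\mid \mathbf{b}\in\{0,1\}^n\big\}=\bigvee_{\mathbf{b}\in\{0,1\}^n}\Big(A(\mathbf{b})\wedge\bigwedge\{x_i\mid i\in\mathbf{b}^{-1}(1)\}\Big).$$
I would then define a candidate measure $m\colon 2^{\{1,\dots,n\}}\to L$ by setting $m(I):=A(1_I)$, where $1_I\in\{0,1\}^n$ is the characteristic vector of $I$. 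The task is to verify that $m$ is a genuine monotone $L$-valued measure in the sense required before \eqref{eq4}, namely $m(\emptyset)=A(\mathbf{0})=0$ and $m(\{1,\dots,n\})=A(\mathbf{1})=1$ (these follow from the boundary conditions defining an aggregation function), and $m(I_1)\leq m(I_2)$ whenever $I_1\subseteq I_2$ (which follows from the monotonicity of $A$, since $1_{I_1}\leq 1_{I_2}$). With $m$ so defined, rewriting the sum over $\mathbf{b}$ as a sum over subsets $I=\mathbf{b}^{-1}(1)$ turns \eqref{eq6} verbatim into \eqref{eq4}, giving $A=\mathsf{Su}_m$.

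For the converse (ii)$\Rightarrow$(i), I would start from an arbitrary monotone $L$-valued measure $m$ and the function $\mathsf{Su}_m$ given by \eqref{eq4}. The point is that $\mathsf{Su}_m$ is a weighted lattice polynomial: it is built from the projections $x_i$ and the constants $m(I)\in L$ using only $\wedge$ and $\vee$, applied finitely many times. Since every weighted polynomial is compatible (as noted in the text just after the definition of compatible functions), $\mathsf{Su}_m$ is compatible. It remains to check that $\mathsf{Su}_m$ is an aggregation function: nondecreasingness in each coordinate is immediate because each term $m(I)\wedge\bigwedge_{i\in I}x_i$ is nondecreasing, and the boundary conditions $\mathsf{Su}_m(\mathbf{0})=0$, $\mathsf{Su}_m(\mathbf{1})=m(\{1,\dots,n\})=1$ follow from $m(\emptyset)=0$ and $m(\{1,\dots,n\})=1$. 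This establishes (i).

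I do not expect a serious obstacle here, since the heavy lifting was already done in proving \eqref{eq6}; the proof is essentially a dictionary translation between the language of boolean arguments and the language of set-valued measures. The one point that deserves care, and which is the most delicate step, is confirming that the map $\mathbf{b}\mapsto \mathbf{b}^{-1}(1)$ sets up an exact correspondence between the terms $G_{\mathbf{b}}$ and the summands of \eqref{eq4} — in particular that the empty-meet convention $\bigwedge\emptyset=1$ used in case $\mathbf{b}^{-1}(1)=\emptyset$ matches the contribution of $I=\emptyset$ on the Sugeno side, where $m(\emptyset)=0$ forces that term to vanish and hence causes no discrepancy. Once this bookkeeping is verified, the equivalence follows directly by comparing \eqref{eq6} and \eqref{eq4}, exactly as the text anticipates.
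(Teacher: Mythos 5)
Your proof is correct and takes essentially the same approach as the paper: the paper also derives Theorem \ref{thm2} by comparing the representation \eqref{eq6} with the Sugeno formula \eqref{eq4}, identifying each boolean vector $\mathbf{b}\in\{0,1\}^n$ with the characteristic vector $1_I$ of $I=\mathbf{b}^{-1}(1)$ and setting $m(I)=A(1_I)$. You have simply made explicit the bookkeeping the paper leaves to the reader (verification of the measure axioms, the empty-meet convention, and compatibility of weighted polynomials for the converse direction).
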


Note that the monotone measure $m$ in Theorem \ref{thm2} is given by $m(I) = A(1_I)$. Our
result brings a new characterization of the classical Sugeno integral in
discrete setting. Evidently, due to (\ref{eq4}), Theorem \ref{thm2} can be extended to any
bounded distributive lattice $(L, 0, 1, \leq)$.

Based on our results, the following consequences for particular types of lattices can be shown straightforwardly:

\begin{itemize}
\item[-] if the considered bounded distributive lattice $L$ is a direct product of bounded distributive lattices $L_k,\, k\in K$ \cite{Gr2} , then the $n$-ary Sugeno integral on $L$ with respect to an $L$-valued measure $m$ can be represented in the form of a direct product of $n$-ary Sugeno integrals on $L_k$ with respect to measures $m_k$, where $m_k$ is the $k$-th projection of $m$ into $L_k$

\item[-] if the considered bounded distributive lattice $L$ is a horizontal sum \cite{Gr2} of bounded distributive lattices $L_k, \,k\in K$, then the $n$-ary Sugeno integral on $L$ with respect to an $L$-valued measure $m$ (with integrand $u$) can be seen as supremum of $n$-ary Sugeno integrals on $L_k$ with respect to measures $m_k$ and with integrand $u^k$, where $m_k(I) = m(I)$ if $m(I)$ is from $L_k$, and $m_k(I) = 0$ otherwise, and similarly, for the single components of the integrated vector $u^k$ we have $u^k_i = u_i$  if $u_i$ is from $L_k$, and it is $0$ otherwise ($i = 1,\dots,n$).

\end{itemize}

\section{Concluding remarks}
We have introduced a new property which characterizes the discrete Sugeno
integral not only in its original form, when $[0,1]$-valued functions and
measures are considered, but also in the case of general bounded lattices.
This property, compatibility, has an important impact for decision procedures
which will be the topic of our next investigations. Here we recall only the
next fact: in multicriteria decision problems based on $n$ criteria and dealing
with alternatives described by score vectors from $[0,1]^n$, often the exact
numerical scores are replaced by some ordinal scale, e.g. by a linguistic
scale. The transition from numerical inputs to linguistic values is done by
means of interval partitions of the original scale $[0,1]$. When looking for
normed utility functions (i.e., aggregation functions) where the output
recommendation based on linguistic values does not depend on the numerical
values of score vectors, then due to Theorem \ref{thm2} (applied for $L=[0,1]$), 
only Sugeno integrals (i.e.,
idempotent polynomials) can be considered.

\section*{Acknowledgements}
The first author was supported by the international project Austrian Science Fund (FWF)-Grant Agency of the Czech Republic (GA\v{C}R) number I 1923-N25; the second author by the Slovak VEGA Grant 1/0420/15 and by the NPUII project LQ1602; the third author by the project of Palack\'y University Olomouc IGA PrF2015010 and by the Slovak VEGA Grant no. 2/0044/16.

%The authors would like to thank the anonymous reviewers for their helpful and constructive comments which helped enhance this %paper.

%The second author was supported by the ESF Fund CZ.1.07/2.3.00/30.0041 and by the Slovak VEGA Grant 2/0028/13, the first author by the international project Austrian Science Fund (FWF)-Grant Agency of the Czech Republic (GA\v{C}R) number I 1923-N25. 

%% main text

\end{document}